\documentclass[12pt]{amsproc}
\usepackage{hyperref}
\usepackage{geometry}
\geometry{
a4paper,
total={170mm,257mm},
left=30mm,
right=30mm,
top=30mm,
bottom=25mm,
}

\usepackage{graphicx}
\usepackage{amsthm}
\usepackage{amssymb}

\theoremstyle{plain}
\newtheorem{thm}{Theorem}[section]
\newtheorem{cor}[thm]{Corollary}
\newtheorem{lem}[thm]{Lemma}

\newtheorem{defn}[thm]{Definition}

\newtheorem{question}[thm]{Question}

\begin{document}

\title{On Graded $\phi-$Prime Submodules}

\author{Azzh Saad \textsc{Alshehry}}
\address{Department of Mathematical Sciences, Faculty of Sciences, Princess Nourah Bint Abdulrahman University, P.O. Box 84428, Riyadh 11671, Saudi Arabia.}
\email{asalshihry@pnu.edu.sa}

\author{Malik \textsc{Bataineh}}
\address{Department of Mathematics and Statistics, Jordan University of Science and Technology, Irbid, Jordan}
\email{msbataineh@just.edu.jo}

\author{Rashid \textsc{Abu-Dawwas}}
\address{Department of Mathematics, Yarmouk University, Irbid, Jordan}
\email{rrashid@yu.edu.jo}

\subjclass[2010]{Primary 16W50; Secondary 13A02}

\keywords{Graded prime submodules; graded $\phi-$prime submodules.}

\begin{abstract}
Let $R$ be a graded commutative ring with non-zero unity $1$ and $M$ be a graded unitary $R$-module. Let $GS(M)$ be the set of all graded $R$-submodules of $M$ and $\phi: GS(M)\rightarrow GS(M)\bigcup\{\emptyset\}$ be a function. A proper graded $R$-submodule $K$ of $M$ is said to be a graded $\phi-$prime $R$-submodule of $M$ if whenever $r$ is a homogeneous element of $R$ and $m$ is a homogeneous element of $M$ such that $rm\in K-\phi(K)$, then either $m\in K$ or $r\in (K:_{R}M)$. If $\phi(K)=\emptyset$ for all $K\in GS(M)$, then a graded $\phi-$prime submodule is exactly a graded prime submodule. If $\phi(K)=\{0\}$ for all $K\in GS(M)$, then a graded $\phi-$prime submodule is exactly a graded weakly prime submodule. Several properties of graded $\phi-$prime submodules have been investigated.
\end{abstract}

\maketitle

\section{Introduction}

Throughout this article, $G$ will be a group with identity $e$ and $R$ a commutative ring with nonzero unity $1$. Then $R$ is said to be $G$-graded if $R=\displaystyle\bigoplus_{g\in G} R_{g}$ with $R_{g}R_{h}\subseteq R_{gh}$ for all $g, h\in G$ where $R_{g}$ is an additive subgroup of $R$ for all $g\in G$. The elements of $R_{g}$ are called homogeneous of degree $g$. If $x\in R$, then $x$ can be written uniquely as $\displaystyle\sum_{g\in G}x_{g}$, where $x_{g}$ is the component of $x$ in $R_{g}$. It is known that $R_e$ is a subring of $R$ and $1\in R_e$. The set of all homogeneous elements of $R$ is $h(R)=\displaystyle\bigcup_{g\in G}R_{g}$. Assume that $M$ is a left unitary $R$-module. Then $M$ is said to be $G$-graded if $M=\displaystyle\bigoplus_{g\in G}M_{g}$ with $R_{g}M_{h}\subseteq M_{gh}$ for all $g,h\in G$ where $M_{g}$ is an additive subgroup of $M$ for all $g\in G$. The elements of $M_{g}$ are called homogeneous of degree $g$. It is clear that $M_{g}$ is an $R_{e}$-submodule of $M$ for all $g\in G$. If $x\in M$, then $x$ can be written uniquely as $\displaystyle\sum_{g\in G}x_{g}$, where $x_{g}$ is the component of $x$ in $M_{g}$. The set of all homogeneous elements of $M$ is $h(M)=\displaystyle\bigcup_{g\in G}M_{g}$. Let $K$ be an $R$-submodule of a graded $R$-module $M$. Then $K$ is said to be graded $R$-submodule if $K=\displaystyle\bigoplus_{g\in G}(K\cap M_{g})$, i.e., for $x\in K$, $x=\displaystyle\sum_{g\in G}x_{g}$ where $x_{g}\in K$ for all $g\in G$. An $R$-submodule of a graded $R$-module need not be graded. For more details and terminology, see \cite{Hazart, Nastasescue}.

\begin{lem}\label{1}(\cite{Farzalipour}, Lemma 2.1) Let $R$ be a graded ring and $M$ be a graded $R$-module.

\begin{enumerate}

\item If $I$ and $J$ are graded ideals of $R$, then $I+J$ and $I\bigcap J$ are graded ideals of $R$.

\item If $N$ and $K$ are graded $R$-submodules of $M$, then $N+K$ and $N\bigcap K$ are graded $R$-submodules of $M$.

\item If $N$ is a graded $R$-submodule of $M$, $r\in h(R)$, $x\in h(M)$ and $I$ is a graded ideal of $R$, then $Rx$, $IN$ and $rN$ are graded $R$-submodules of $M$. Moreover, $(N:_{R}M)=\left\{r\in R:rM\subseteq N\right\}$ is a graded ideal of $R$.
\end{enumerate}
\end{lem}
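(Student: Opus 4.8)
The plan is to work throughout from the defining property of a graded submodule: a submodule $K$ of $M$ is graded precisely when, for every $x\in K$ with homogeneous decomposition $x=\sum_{g\in G}x_g$, each component $x_g$ again lies in $K$. An equivalent and more convenient reformulation is that a submodule is graded if and only if it can be generated by homogeneous elements; I would record this reformulation first, since its verification (decompose the ring coefficients in any $R$-linear expression, observe that each resulting term $r_g s$ with $s$ homogeneous is itself homogeneous and lies in the submodule, then regroup by degree) is exactly the engine behind every item of the lemma. The same statements hold verbatim for graded ideals, viewing $R$ as a graded module over itself, so parts (1) and (2) can be treated in parallel.

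For parts (1) and (2), the arguments are immediate from the componentwise criterion. For a sum $I+J$ (resp.\ $N+K$), take $x=a+b$ with $a,b$ in the respective graded pieces, decompose $a=\sum_g a_g$ and $b=\sum_g b_g$ with $a_g,b_g$ in the corresponding summand, and note that $x=\sum_g(a_g+b_g)$ exhibits the homogeneous components of $x$ as elements of the sum. For an intersection $I\cap J$ (resp.\ $N\cap K$), take $x$ in the intersection, decompose $x=\sum_g x_g$, and use that gradedness of each factor forces $x_g$ into both factors, hence into the intersection.

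For part (3) I would exploit the homogeneous-generation criterion. For $Rx$ with $x\in M_\lambda$, a typical element is $rx$ with $r=\sum_g r_g$; then $rx=\sum_g r_g x$, and each $r_g x\in R_gM_\lambda\subseteq M_{g\lambda}$ is homogeneous and lies in $Rx$, so the homogeneous components of $rx$ stay inside $Rx$. The cases $IN$ and $rN$ are handled the same way: $IN$ is spanned by products $a_g n_h$ of homogeneous elements $a_g\in I\cap R_g$ and $n_h\in N\cap M_h$, each lying in $R_gM_h\subseteq M_{gh}$ and hence homogeneous, so $IN$ is generated by homogeneous elements; and $rN$ with $r\in R_\mu$ is spanned by the homogeneous products $rn_h\in R_\mu M_h\subseteq M_{\mu h}$. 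Finally, for the colon ideal $(N:_RM)$, after the routine check that it is an ideal, I would take $r\in(N:_RM)$ with $r=\sum_g r_g$ and show each $r_g$ lies in the colon ideal. Testing on a homogeneous $m\in M_h$, the products $r_g m\in M_{gh}$ sit in pairwise distinct degrees, so $rm=\sum_g r_g m$ is exactly the homogeneous decomposition of $rm\in N$; since $N$ is graded, each $r_g m\in N$. Extending additively over the homogeneous generators of $M$ gives $r_gM\subseteq N$, hence $r_g\in(N:_RM)$.

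The computations are entirely routine; the only point demanding care is the bookkeeping in part (3), where one must observe that because the group operation $g\mapsto g\lambda$ (resp.\ $g\mapsto gh$) is a bijection of $G$, the terms $r_g x$ (resp.\ $r_g m$) really are the distinct homogeneous components rather than merely a sum of homogeneous pieces. Once this indexing is pinned down, gradedness of the target submodule in the colon-ideal case delivers the required membership with no further work. There is no genuine obstacle here; the lemma is foundational bookkeeping about gradings, and the main risk is conflating ``a sum of homogeneous elements'' with ``the homogeneous decomposition.''
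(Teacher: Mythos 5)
Your proof is correct, but there is nothing in the paper to compare it against: the paper does not prove this lemma, it imports it by citation as Lemma 2.1 of \cite{Farzalipour}, so any proof must be reconstructed from scratch exactly as you did. Your reconstruction is the standard one — the componentwise criterion for parts (1) and (2), and the homogeneous-generation criterion for $Rx$, $IN$, $rN$ — and it is complete. You also correctly isolated the only delicate point, needed for the colon ideal $(N:_{R}M)$: since $g\mapsto gh$ is a bijection of $G$, the terms $r_{g}m$ lie in pairwise distinct components $M_{gh}$, so $rm=\sum_{g}r_{g}m$ is the actual homogeneous decomposition of $rm$, and gradedness of $N$ then forces each $r_{g}m\in N$; without that observation one could only conclude that certain sums of the $r_{g}m$ lie in $N$, and the argument would stall.
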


Let $I$ be a proper graded ideal of $R$. Then the graded radical of $I$ is $Grad(I)$, and is defined to be the set of all $r\in R$ such that for each $g\in G$, there exists a positive integer $n_{g}$ for which $r_{g}^{n_{g}}\in I$. One can see that if $r\in h(R)$, then $r\in Grad(I)$ if and only if $r^{n}\in I$ for some positive integer $n$. In fact, $Grad(I)$ is a graded ideal of $R$, see \cite{Refai Hailat}. A graded $R$-submodule $K$ of $M$ is called a graded radical $R$-submodule of $M$ if $Grad((K :_{R} M)) = (K :_{R} M)$.

Graded prime ideals play a fundamental role in graded ring theory. One of the natural generalizations of graded prime ideals which have attracted the interest
of several authors in the last two decades is the concept of graded prime submodules, see for example \cite{Dawwas Zoubi Bataineh, Atani, Farzalipour, Oral}. A proper graded $R$-submodule $K$ of $M$ is said to be a graded prime $R$-submodule of $M$ if whenever $r\in h(R)$ and $m\in h(M)$ such that $rm\in K$, then either $m\in K$ or $r\in (K:_{R}M)$. It is known that if $K$ is a graded prime $R$-submodule of $M$, then $(K:_{R}M)$ is a graded prime ideal of $R$. A proper graded $R$-submodule $K$ of $M$ is said to be a graded weakly prime $R$-submodule of $M$ if whenever $r\in h(R)$ and $m\in h(M)$ such that $0\neq rm\in K$, then either $m\in K$ or $r\in (K:_{R}M)$. A proper graded $R$-submodule $K$ of $M$ is said to be a graded almost prime $R$-submodule of $M$ if whenever $r\in h(R)$ and $m\in h(M)$ such that $rm\in K-(K:_{R}M)K$, then either $m\in K$ or $r\in (K:_{R}M)$. So, every graded prime submodule is a graded weakly prime submodule, and every graded weakly prime submodule is a graded almost prime submodule. Let $GS(M)$ be the set of all graded $R$-submodules of $M$ and $\phi: GS(M)\rightarrow GS(M)\bigcup\{\emptyset\}$ be a function. A proper graded $R$-submodule $K$ of $M$ is said to be a graded $\phi-$prime $R$-submodule of $M$ if whenever $r\in h(R)$ and $m\in h(M)$ such that $rm\in K-\phi(K)$, then either $m\in K$ or $r\in (K:_{R}M)$. Since $K-\phi(K) = K-(K\bigcap\phi(P))$, so without loss of generality, throughout this article, we will consider $\phi(K)\subseteq K$. Let $M$ be a $G$-graded $R$-module, $g\in G$ and $K$ be a graded $R$-submodule of $M$ such that $K_{g}\neq M_{g}$. Then $K$ is said to be a $g-\phi-$prime $R$-submodule of $M$ if whenever $r\in R_{e}$ and $m\in M_{g}$ such that $rm\in K-\phi(K)$, then either $m\in K$ or $r\in (K:_{R}M)$. Throughout this article, we use the following functions:

\begin{center}
$\phi_{\emptyset}(K)=\emptyset$ for all $K\in GS(M)$,

\vspace{0.25cm}

$\phi_{0}(K)=\{0\}$ for all $K\in GS(M)$,

\vspace{0.25cm}

$\phi_{1}(K)=(K:_{R}M)K$ for all $K\in GS(M)$,

\vspace{0.25cm}

$\phi_{2}(K)=(K:_{R}M)^{2}K$ for all $K\in GS(M)$ and

\vspace{0.25cm}

$\phi_{w}(K)=\displaystyle\bigcap_{i=1}^{\infty}(K:_{R}M)^{i}K$ for all $K\in GS(M)$.
\end{center}

Clearly, graded $\phi_{\emptyset}-$prime, $\phi_{0}-$prime and $\phi_{1}-$prime submodules are graded prime, graded weakly prime and graded almost prime submodules respectively. Obviously, for any graded submodule and every positive integer $k$, we have the following implications:

\textbf{graded prime submodule $\Rightarrow$ graded $\phi_{w}-$prime submodule $\Rightarrow$ graded $\phi_{k}-$prime submodule $\Rightarrow$ graded $\phi_{k-1}-$prime submodule.}

\vspace{0.2cm}

For functions $\phi, \varphi:GS(M)\rightarrow GS(M)\bigcup\{\emptyset\}$, we write $\phi\leq\varphi$ if $\phi(K)\subseteq\varphi(K)$ for all $K\in GS(M)$. So, if $\phi\leq\varphi$, then every graded $\phi-$prime submodule is graded $\varphi-$prime.

Among several results, we proved that if $K$ is a $g-\phi-$prime $R$-submodule of $M$ such that $(K :_{R_{e}} M)K_{g}\nsubseteq\phi(K)$, then $K$ is a $g$-prime $R$-submodule of $M$ (Theorem \ref{Theorem 2.1}). We showed that if $K$ is a $g-\phi-$prime $R$-submodule of $M$ which is not $g$-prime and $\phi(K)\subseteq(K :_{R_{e}} M)^{2}K_{g}$, then $\phi(K)=(K :_{R_{e}} M)^{i}K_{g}$ for all $i\geq1$ (Corollary \ref{Corollary 2.3}). We proved that if $0\neq m\in M_{g}$ such that $R_{e}m\neq M_{g}$, $(0 :_{R} m) =\{0\}$ and $Rm$ is not a $g$-prime $R$-submodule of $M$, then $Rm$ is not a $g-\phi_{1}-$prime $R$-submodule of $M$ (Theorem \ref{Theorem 2.7}). We showed that if $r\in R_{e}$ such that $rM_{g}\neq M_{g}$ and $(0 :_{M} r)\subseteq rM$, then $rM$ is a $g-\phi_{1}-$prime $R$-submodule of $M$ if and only if it is a $g$-prime $R$-submodule of $M$ (Theorem \ref{Theorem 2.10}). In Theorem \ref{Theorem 2.11}, we introduced a characterization for graded $g-\phi-$prime $R$-submodules. In Theorem \ref{Theorem 2.12}, we study graded $\phi-$prime $R$-submodules over graded quotient $R$-modules. In Theorem \ref{Theorem 2.12 (2)}, we examine graded $\phi-$prime $R$-submodules over multiplicative subsets of $h(R)$. Finally, we proposed an interesting question (Question \ref{2}). Most of the results in this article are inspired from \cite{Zamani}.

\section{Graded $\phi-$Prime Submodules}

In this section, we introduce and study the concept of graded $\phi-$prime submodules.

\begin{defn} Let $M$ be a $G$-graded $R$-module and $\phi: GS(M)\rightarrow GS(M)\bigcup\{\emptyset\}$ be a function.
\begin{enumerate}
\item A proper graded $R$-submodule $K$ of $M$ is said to be a graded $\phi-$prime $R$-submodule of $M$ if whenever $r\in h(R)$ and $m\in h(M)$ such that $rm\in K-\phi(K)$, then either $m\in K$ or $r\in (K:_{R}M)$.

\item Let $K$ be a graded $R$-submodule of $M$ and $g\in G$ such that $K_{g}\neq M_{g}$. Then $K$ is said to be a $g-\phi-$prime $R$-submodule of $M$ if whenever $r\in R_{e}$ and $m\in M_{g}$ such that $rm\in K-\phi(K)$, then either $m\in K$ or $r\in (K:_{R}M)$.
\end{enumerate}
\end{defn}

The concept of $g$-prime ideals have been introduced and investigated in \cite{Dawwas Yildiz}. Let $P$ be a graded ideal of a $G$-graded ring $R$ and $g\in G$ such that $P_{g}\neq R_{g}$. Then $P$ is called a $g$-prime ideal of $R$ if whenever $x, y\in R_{g}$ such that $xy\in P$, then either $x\in P$ or $y\in P$. Motivated by $g$-prime ideals, we introduce the following definitions:

\begin{defn} Let $M$ be a $G$-graded $R$-module, $K$ be a graded $R$-submodule of $M$ and $g\in G$ such that $K_{g}\neq M_{g}$. Then
\begin{enumerate}
\item $K$ is said to a $g$-prime $R$-submodule of $M$ if whenever $r\in R_{e}$ and $m\in M_{g}$ such that $rm\in K$, then either $m\in K$ or $r\in (K:_{R}M)$.

\item $K$ is said to a $g$-weakly prime $R$-submodule of $M$ if whenever $r\in R_{e}$ and $m\in M_{g}$ such that $0\neq rm\in K$, then either $m\in K$ or $r\in (K:_{R}M)$.
\end{enumerate}
\end{defn}

\begin{thm}\label{Theorem 2.1} Let $M$ be a $G$-graded $R$-module, $g\in G$ and $K$ be a $g-\phi-$prime $R$-submodule of $M$. If $(K :_{R_{e}} M)K_{g}\nsubseteq\phi(K)$, then $K$ is a $g$-prime $R$-submodule of $M$.
\end{thm}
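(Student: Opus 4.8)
The plan is to verify the defining condition of a $g$-prime submodule directly: take $r\in R_{e}$ and $m\in M_{g}$ with $rm\in K$, and aim to show that $m\in K$ or $r\in(K:_{R}M)$. If $rm\notin\phi(K)$, then $rm\in K-\phi(K)$ and the hypothesis that $K$ is $g-\phi-$prime gives the conclusion at once. Hence the only case that needs work is $rm\in\phi(K)$ (in particular $\phi(K)\neq\emptyset$, so $\phi(K)$ is a genuine graded submodule), and I would handle it by contradiction: assume $m\notin K$ and $r\notin(K:_{R}M)$.

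Under this assumption I would exploit the hypothesis $(K:_{R_{e}}M)K_{g}\nsubseteq\phi(K)$ to fix $a\in(K:_{R_{e}}M)$ and $k\in K_{g}$ with $ak\notin\phi(K)$; note that $a\in(K:_{R}M)$, so $aM\subseteq K$, and that $k\in K$. The idea is to perturb the pair $(r,m)$ by $(a,k)$ so as to produce a product lying in $K-\phi(K)$ to which the $g-\phi-$prime property can be applied, while keeping $a$ absorbed into $(K:_{R}M)$ and $k$ absorbed into $K$. Concretely I would split into three cases. If $rk\notin\phi(K)$, consider $r(m+k)=rm+rk$: since $\phi(K)$ is a graded submodule and $rm\in\phi(K)$ while $rk\notin\phi(K)$, the sum lies in $K-\phi(K)$; as $m+k\in M_{g}$ and $m+k\notin K$, the $g-\phi-$prime property forces $r\in(K:_{R}M)$, a contradiction. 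If instead $rk\in\phi(K)$ but $am\notin\phi(K)$, I would use $(r+a)m=rm+am\in K-\phi(K)$ and conclude $r+a\in(K:_{R}M)$, whence $r=(r+a)-a\in(K:_{R}M)$, again a contradiction. Finally, if both $rk\in\phi(K)$ and $am\in\phi(K)$, I would test $(r+a)(m+k)=rm+rk+am+ak$, in which the first three summands lie in $\phi(K)$ and only $ak$ does not, so the product lies in $K-\phi(K)$; since $m+k\notin K$, this yields $r+a\in(K:_{R}M)$ and hence $r\in(K:_{R}M)$, the final contradiction. Each case closes the argument, so $m\in K$ or $r\in(K:_{R}M)$ as required.

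The routine verifications are that all the constructed products are homogeneous of degree $g$ (the scalars lie in $R_{e}$ and the module elements in $M_{g}$) and that each product indeed lands in $K$; these follow since $K$ and $\phi(K)$ are graded submodules and $aM\subseteq K$. The main obstacle is the last case, where the naive single-coordinate perturbations $r(m+k)$ and $(r+a)m$ may both fall inside $\phi(K)$ and so give no information; the point is to perturb both coordinates simultaneously and to observe that $ak$ is the \emph{only} term escaping $\phi(K)$, which is exactly what the hypothesis $(K:_{R_{e}}M)K_{g}\nsubseteq\phi(K)$ was chosen to guarantee. Keeping track of which summands are absorbed by the submodule $\phi(K)$ and which by $K$ is the crux of the bookkeeping.
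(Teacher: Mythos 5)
Your proof is correct and follows essentially the same route as the paper's: both hinge on perturbing the pair $(r,m)$ by elements of $(K:_{R_{e}}M)$ and $K_{g}$ so that the resulting product lands in $K-\phi(K)$, with the same three-case structure culminating in the double perturbation $(r+a)(m+k)$. The only differences are cosmetic — you fix a single witness pair $(a,k)$ with $ak\notin\phi(K)$ up front and argue by contradiction, whereas the paper cases on whether $rK_{g}\subseteq\phi(K)$ and $(K:_{R_{e}}M)m\subseteq\phi(K)$ and chooses witnesses as needed, arguing directly.
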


\begin{proof}Let $r\in R_{e}$ and $m\in M_{g}$ such that $rm\in K$. If $rm\notin\phi(K)$, then since $K$ is $g-\phi-$prime, we have $r\in(K :_{R} M)$ or $m\in K$. Suppose that $rm\in\phi(K)$. If $rK_{g}\nsubseteq\phi(K)$, then there exists $p\in K_{g}$ such that $rp\notin\phi(K)$, and then $r(m + p)\in K-\phi(K)$. Therefore, $r\in(K :_{R} M)$ or $m + p\in K$, and hence $r\in(K :_{R} M)$ or $m\in K$. So, we may assume that $rK_{g}\subseteq\phi(K)$. If $(K:_{R_{e}}M)m\nsubseteq\phi(K)$, then there exists $u\in(K:_{R_{e}}M)$ such that $um\notin\phi(K)$, and then $(r + u)m\in K-\phi(K)$. Since $K$ is a $g-\phi-$prime submodule, we have $r + u\in(K :_{R} M)$ or $m\in K$. So, $r\in(K :_{R} M)$ or $m\in K$. Therefore, we assume that $(K:_{R_{e}}M)m\subseteq\phi(K)$. Now, since $(K :_{R_{e}} M)K_{g}\nsubseteq\phi(K)$, there exist $s\in(K :_{R_{e}} M)$ and $t\in K_{g}$ such that $st\notin\phi(K)$. So, $(r + s)(m + t)\in K-\phi(K)$, and hence $r + s\in(K :_{R} M)$ or $m + t\in K$. Therefore, $r\in(K :_{R} M)$ or $m\in K$.
\end{proof}

\begin{cor}\label{Corollary 2.2} Let $M$ be a $G$-graded $R$-module, $g\in G$ and $K$ be a $g$-weakly prime $R$-submodule of $M$. If $(K :_{R_{e}} M)K_{g}\neq\{0\}$, then $K$ is a $g$-prime $R$-submodule of $M$.
\end{cor}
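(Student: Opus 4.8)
The plan is to recognize that this corollary is nothing more than the specialization of Theorem \ref{Theorem 2.1} to the function $\phi_0$, and to set up the proof so that the two hypotheses of that theorem are seen to hold automatically. First I would observe that a $g$-weakly prime $R$-submodule is precisely a $g-\phi_0-$prime $R$-submodule. Indeed, for $r\in R_e$ and $m\in M_g$ the condition $rm\in K-\phi_0(K)=K-\{0\}$ is literally the statement $0\neq rm\in K$, so the defining implication of $g$-weak primeness coincides verbatim with the defining implication of $g-\phi_0-$primeness. Thus the assumption that $K$ is $g$-weakly prime gives us, free of charge, that $K$ is $g-\phi_0-$prime.

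Next I would translate the numerical hypothesis into the containment hypothesis required by Theorem \ref{Theorem 2.1}. Since $\phi_0(K)=\{0\}$ and $0$ always lies in the submodule $(K:_{R_e}M)K_g$, the negated containment $(K:_{R_e}M)K_g\nsubseteq\phi_0(K)$ holds exactly when $(K:_{R_e}M)K_g$ contains some nonzero element, i.e.\ exactly when $(K:_{R_e}M)K_g\neq\{0\}$. This is precisely the hypothesis of the corollary. With both ingredients in place, I would invoke Theorem \ref{Theorem 2.1} with $\phi=\phi_0$ to conclude that $K$ is a $g$-prime $R$-submodule of $M$, completing the argument.

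There is essentially no technical obstacle here; the only point requiring a moment's care is the set-theoretic equivalence in the second paragraph, namely that for a set containing $0$ the conditions \emph{being different from} $\{0\}$ and \emph{not being contained in} $\{0\}$ coincide. Once that is noted, the corollary reduces cleanly to the already-established theorem, so the proof is a one-line application after the two reformulations.
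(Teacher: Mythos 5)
Your proposal is correct and matches the paper's own proof, which is exactly the one-line application of Theorem \ref{Theorem 2.1} with $\phi=\phi_{0}$. Your two spelled-out reformulations (that $g$-weakly prime coincides with $g-\phi_{0}-$prime, and that $(K:_{R_{e}}M)K_{g}\neq\{0\}$ is equivalent to $(K:_{R_{e}}M)K_{g}\nsubseteq\phi_{0}(K)$) are precisely the details the paper leaves implicit.
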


\begin{proof} Apply Theorem \ref{Theorem 2.1} by $\phi=\phi_{0}$.
\end{proof}

\begin{cor}\label{Corollary 2.3}Let $M$ be a $G$-graded $R$-module, $g\in G$ and $K$ be a $g-\phi-$prime $R$-submodule of $M$ which is not $g$-prime. If $\phi(K)\subseteq(K :_{R_{e}} M)^{2}K_{g}$, then $\phi(K)=(K :_{R_{e}} M)^{i}K_{g}$ for all $i\geq1$.
\end{cor}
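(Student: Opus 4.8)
The plan is to read Theorem \ref{Theorem 2.1} in its contrapositive form. Since $K$ is assumed $g-\phi-$prime but \emph{not} $g$-prime, the conclusion of Theorem \ref{Theorem 2.1} must fail, hence so must its hypothesis; that is, we are forced to have $(K:_{R_{e}}M)K_{g}\subseteq\phi(K)$. Writing $A=(K:_{R_{e}}M)$ for brevity, this gives $AK_{g}\subseteq\phi(K)$, while the standing assumption supplies the reverse-type inclusion $\phi(K)\subseteq A^{2}K_{g}$.

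Next I would set up the inclusion chain that turns these two facts into equalities. The key elementary observation is that $A^{2}K_{g}\subseteq AK_{g}$. This holds because $AK_{g}\subseteq K_{g}$: indeed $A\subseteq R_{e}$ gives $AM_{g}\subseteq M_{g}$, and $AM\subseteq K$ by the definition of $A$, so $AM_{g}\subseteq K\cap M_{g}=K_{g}$, whence $A^{2}K_{g}=A(AK_{g})\subseteq AK_{g}$. Combining everything yields $AK_{g}\subseteq\phi(K)\subseteq A^{2}K_{g}\subseteq AK_{g}$, so all inclusions are equalities; in particular $\phi(K)=AK_{g}=A^{2}K_{g}$.

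Finally I would promote the single equality $AK_{g}=A^{2}K_{g}$ to all powers by induction on $i$. The case $i=1$ is trivial; assuming $AK_{g}=A^{i}K_{g}$, multiplying by $A$ gives $A^{2}K_{g}=A^{i+1}K_{g}$, and since $A^{2}K_{g}=AK_{g}$ we obtain $AK_{g}=A^{i+1}K_{g}$. Hence $\phi(K)=AK_{g}=A^{i}K_{g}$ for every $i\geq1$, as required.

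I do not expect a genuine obstacle here: the argument is essentially a squeeze followed by a one-line induction. The only places demanding care are the correct use of the contrapositive of Theorem \ref{Theorem 2.1} (checking that ``not $g$-prime'' together with ``$g-\phi-$prime'' really forces $(K:_{R_{e}}M)K_{g}\subseteq\phi(K)$) and the verification of the auxiliary inclusion $A^{2}K_{g}\subseteq AK_{g}$, both of which are routine bookkeeping.
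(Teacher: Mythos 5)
Your proposal is correct and follows essentially the same route as the paper: both apply Theorem \ref{Theorem 2.1} in contrapositive form to get $(K:_{R_{e}}M)K_{g}\subseteq\phi(K)$, then squeeze via the chain $(K:_{R_{e}}M)K_{g}\subseteq\phi(K)\subseteq(K:_{R_{e}}M)^{2}K_{g}\subseteq(K:_{R_{e}}M)K_{g}$ and propagate the resulting equality to all powers $i\geq 1$. The only difference is that you spell out the auxiliary inclusion $(K:_{R_{e}}M)^{2}K_{g}\subseteq(K:_{R_{e}}M)K_{g}$ and the final induction, which the paper leaves implicit.
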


\begin{proof}Since $K$ is not a $g$-prime $R$-submodule of $M$, we have by Theorem \ref{Theorem 2.1}, $(K :_{R_{e}} M)K_{g}\subseteq\phi(K)\subseteq(K :_{R_{e}} M)^{2}K_{g}\subseteq(K :_{R_{e}} M)K_{g}$, which implies that $\phi(K)=(K :_{R_{e}} M)K_{g}=(K :_{R_{e}} M)^{2}K_{g}$. Hence, $\phi(K)=(K :_{R_{e}} M)^{i}K_{g}$ for all $i\geq1$.
\end{proof}

\begin{thm}\label{Theorem 2.7}Let $M$ be a $G$-graded $R$-module, $g\in G$ and $0\neq m\in M_{g}$ such that $R_{e}m\neq M_{g}$ and $(0 :_{R} m) =\{0\}$. If $Rm$ is not a $g$-prime $R$-submodule of $M$, then $Rm$ is not a $g-\phi_{1}-$prime $R$-submodule of $M$.
\end{thm}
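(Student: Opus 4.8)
The plan is to produce an explicit witness to the failure of the $g-\phi_1-$prime condition, built from a witness to the failure of $g$-primeness. First I would record the structural facts. By Lemma \ref{1}(3), $Rm$ is a graded $R$-submodule of $M$, and since $m\in M_g$ we have $(Rm)_g=R_em$; the hypothesis $R_em\neq M_g$ then gives $(Rm)_g\neq M_g$, so that the notions ``$g$-prime'' and ``$g-\phi_1-$prime'' actually apply to $Rm$. Writing $P=(Rm:_{R}M)$, I would observe that $\phi_1(Rm)=(Rm:_{R}M)Rm=Pm$ and that this is a graded submodule whose degree-$g$ component is $(Pm)_g=P_em$ (since $P_hm\subseteq M_{hg}$, only $h=e$ contributes to degree $g$).

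The key technical point, and where the hypothesis $(0:_{R}m)=\{0\}$ enters, is the following equivalence: for $s\in R_e$ one has $sm\in\phi_1(Rm)$ if and only if $s\in P$. Indeed, $sm\in Pm$ together with $sm\in M_g$ forces $sm\in(Pm)_g=P_em$, so $sm=p'm$ for some $p'\in P_e$; then $(s-p')m=0$, hence $s-p'\in(0:_{R}m)=\{0\}$ and $s=p'\in P$. I expect the verification that an adjusted witness remains outside $\phi_1(Rm)$ — which rests entirely on this equivalence — to be the main obstacle, since without faithfulness of $m$ one could only conclude $s-p'\in(0:_Rm)$; the remaining arguments are bookkeeping with graded components.

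Now, since $Rm$ is not $g$-prime, I would choose $r\in R_e$ and $x\in M_g$ with $rx\in Rm$ but $x\notin Rm$ and $r\notin P$, and split into two cases. If $rx\notin\phi_1(Rm)$, then $rx\in Rm-\phi_1(Rm)$ while $x\notin Rm$ and $r\notin P$, so the pair $(r,x)$ already witnesses that $Rm$ is not $g-\phi_1-$prime. Otherwise $rx\in\phi_1(Rm)=Pm$, and as $rx\in M_g$ I may write $rx=pm$ with $p\in P_e$. Here I would replace $x$ by $x'=x+m\in M_g$: then $x'\notin Rm$ (because $m\in Rm$ while $x\notin Rm$), and $rx'=rx+rm=(p+r)m\in Rm$. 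Applying the key equivalence to $s=p+r\in R_e$, if $(p+r)m\in\phi_1(Rm)$ then $p+r\in P$, whence $r=(p+r)-p\in P$, a contradiction; thus $rx'\in Rm-\phi_1(Rm)$. Since also $x'\notin Rm$ and $r\notin P$, the pair $(r,x')$ witnesses that $Rm$ is not a $g-\phi_1-$prime $R$-submodule of $M$, which completes the proof.
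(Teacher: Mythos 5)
Your proof is correct and takes essentially the same route as the paper's: both start from a witness $(r,x)$ to the failure of $g$-primeness, dispose of the easy case $rx\notin\phi_{1}(Rm)$, and otherwise pass to the shifted pair $(r,x+m)$, using $(0:_{R}m)=\{0\}$ to rule out $r(x+m)\in\phi_{1}(Rm)$. The only cosmetic difference is that you isolate the faithfulness step as a stand-alone equivalence (proved via the graded component identity $(Pm)_{g}=P_{e}m$) and conclude directly, whereas the paper performs the same computation inline and finishes by contradiction.
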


\begin{proof}Since $Rm$ is not a $g$-prime $R$-submodule of $M$, there exist $a\in R_{e}$ and $y\in M_{g}$ such that $a\notin(Rm :_{R} M)$, $y\notin Rm$ and $ay\in Rm$. If $ay\notin (Rm :_{R} M)m$, then $Rm$ is not a $g-\phi_{1}-$prime $R$-submodule of $M$. Let $ay\in (Rm :_{R} M)m$. Then $y + m\notin Rm$ and
$a(y + m)\in Rm$. If $a(y + m)\notin (Rm :_{R} M)m$, then $Rm$ is not a $g-\phi_{1}-$prime $R$-submodule of $M$. Let $a(y + m)\in (Rm :_{R} M)m$, then $am\in (Rm :_{R} M)m$, which gives that $am = rm$ for some $r\in (Rm :_{R} M)$. Since $(0 :_{R} m) =\{0\}$, it gives that $a = r\in (Rm :_{R} M)$, which is a contradiction.
\end{proof}

\begin{cor}\label{Corollary 2.8}Let $M$ be a $G$-graded $R$-module, $g\in G$ and $m$ be a non-zero element of $M_{g}$ such that $(0 :_{R} m) =\{0\}$ and $R_{e}m\neq M_{g}$. Then $Rm$ is a $g$-prime $R$-submodule of $M$ if and only if $Rm$ is a $g-\phi_{1}-$prime $R$-submodule of $M$.
\end{cor}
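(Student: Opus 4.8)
The plan is to prove the two implications separately, with the substantive direction being an immediate reformulation of Theorem \ref{Theorem 2.7} by contraposition.

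For the forward implication I would specialize the general chain of implications recorded in the introduction (graded prime $\Rightarrow$ graded almost prime, i.e. graded $\phi_{1}$-prime) to the $g$-graded setting. Concretely, suppose $Rm$ is $g$-prime and take $r\in R_{e}$ and $x\in M_{g}$ with $rx\in Rm-\phi_{1}(Rm)$. Since $\phi_{1}(Rm)=(Rm:_{R}M)(Rm)\subseteq Rm$, we have in particular $rx\in Rm$, and $g$-primeness of $Rm$ forces $x\in Rm$ or $r\in(Rm:_{R}M)$. Hence $Rm$ is $g-\phi_{1}-$prime. I would note that this half uses none of the standing hypotheses $(0:_{R}m)=\{0\}$, $R_{e}m\neq M_{g}$, or $m\neq0$; they are needed only for the converse.

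For the backward implication I would argue by contraposition and invoke Theorem \ref{Theorem 2.7} verbatim. The hypotheses of the corollary, namely $0\neq m\in M_{g}$, $R_{e}m\neq M_{g}$, and $(0:_{R}m)=\{0\}$, are exactly those of that theorem, which asserts that if $Rm$ is \emph{not} $g$-prime then $Rm$ is \emph{not} $g-\phi_{1}-$prime. Taking the contrapositive yields: if $Rm$ is $g-\phi_{1}-$prime then $Rm$ is $g$-prime, which together with the forward implication establishes the equivalence.

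I do not anticipate any genuine obstacle here, since the real content has already been absorbed into Theorem \ref{Theorem 2.7}, and the corollary merely packages that theorem with the essentially trivial forward direction. The only point worth flagging is the inclusion $\phi_{1}(Rm)\subseteq Rm$, which guarantees $Rm-\phi_{1}(Rm)\subseteq Rm$ and thereby reduces the forward direction to a one-line deduction from $g$-primeness; this inclusion is immediate from $(Rm:_{R}M)(Rm)\subseteq Rm$.
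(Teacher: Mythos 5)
Your proposal is correct and matches the paper's intent exactly: the paper states Corollary \ref{Corollary 2.8} without proof, as an immediate consequence of Theorem \ref{Theorem 2.7} (by contraposition) combined with the trivial observation that $g$-prime implies $g-\phi_{1}-$prime since $\phi_{1}(Rm)\subseteq Rm$. Your write-up simply makes these two steps explicit, so there is nothing to correct.
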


\begin{thm}\label{Theorem 2.10}Let $M$ be a $G$-graded $R$-module, $r\in R_{e}$ and $g\in G$ such that $rM_{g}\neq M_{g}$. If $(0 :_{M} r)\subseteq rM$, then $rM$ is a $g-\phi_{1}-$prime $R$-submodule of $M$ if and only if it is a $g$-prime $R$-submodule of $M$.
\end{thm}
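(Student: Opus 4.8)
The plan is to prove the two implications separately, the backward one being formal and the forward one carrying all the content. Throughout write $K=rM$, which is a graded submodule by Lemma~\ref{1}(3); since $r\in R_{e}$ one has $rM_{h}\subseteq M_{h}$ for every $h$, so $K_{g}=rM_{g}$, and the hypothesis $rM_{g}\neq M_{g}$ makes $K$ a proper submodule with $K_{g}\neq M_{g}$. Before anything else I would record two computations. First, $\phi_{1}(K)=(K:_{R}M)K\subseteq r^{2}M$: if $s\in(rM:_{R}M)$ and $m'\in M$, then $sm'\in sM\subseteq rM$, say $sm'=rm''$, whence $s(rm')=r(sm')=r^{2}m''\in r^{2}M$. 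Second, and this is exactly where the hypothesis $(0:_{M}r)\subseteq rM$ is used, for a homogeneous $m\in M_{g}$ one has the equivalence $rm\in r^{2}M$ if and only if $m\in rM$. The forward direction is immediate; for the reverse, $rm=r^{2}w$ for some $w\in M$, and comparing degree-$g$ components (legitimate since $r\in R_{e}$ and $rm\in M_{g}$) lets me take $w\in M_{g}$, so $r(m-rw)=0$, i.e. $m-rw\in(0:_{M}r)\subseteq rM$, giving $m\in rM$.

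The backward implication is formal: if $K$ is $g$-prime and $a\in R_{e}$, $m\in M_{g}$ satisfy $am\in K-\phi_{1}(K)$, then in particular $am\in K$, so $g$-primeness yields $m\in K$ or $a\in(K:_{R}M)$; hence $K$ is $g$-$\phi_{1}$-prime.

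For the forward implication I assume $K=rM$ is $g$-$\phi_{1}$-prime and verify the $g$-prime condition. Let $a\in R_{e}$, $m\in M_{g}$ with $am\in rM$, and suppose $m\notin rM$; I must produce $a\in(rM:_{R}M)$. If $am\notin\phi_{1}(K)$, then $am\in K-\phi_{1}(K)$ and $g$-$\phi_{1}$-primeness together with $m\notin rM$ gives $a\in(rM:_{R}M)$ at once. Otherwise $am\in\phi_{1}(K)\subseteq r^{2}M$, and here I perturb the scalar by $r$: set $b:=a+r\in R_{e}$. Then $bm=am+rm\in rM$ with $am\in r^{2}M$ but $rm\notin r^{2}M$ (by the equivalence above, since $m\notin rM$), so $bm\notin r^{2}M$, and as $\phi_{1}(K)\subseteq r^{2}M$ this gives $bm\notin\phi_{1}(K)$; thus $bm\in K-\phi_{1}(K)$. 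Applying $g$-$\phi_{1}$-primeness to $(b,m)$ and using $m\notin rM$ forces $b\in(rM:_{R}M)$, and since $r\in(rM:_{R}M)$ we conclude $a=b-r\in(rM:_{R}M)$, as required.

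The step I expect to be the genuine obstacle is the case $am\in\phi_{1}(K)$. The naive move---perturbing the module element $m$ by a suitable element of $rM_{g}$ to push the product out of $\phi_{1}(K)$---only ever produces the local containment $aM_{g}\subseteq rM_{g}$, which is too weak to yield the global membership $a\in(rM:_{R}M)$ demanded by $g$-primeness, and it can genuinely stall when $a$ misbehaves on components of degree other than $g$. The device that circumvents this is to perturb the \emph{scalar} rather than the module element: replacing $a$ by $a+r$ exploits that $r$ automatically lies in $(rM:_{R}M)$ and that, by the annihilator hypothesis, $rm\notin r^{2}M$ whenever $m\notin rM$, so the perturbation drives $bm$ out of $\phi_{1}(K)$ while the resulting conclusion $b\in(rM:_{R}M)$ transfers back to $a$ via $a=b-r$.
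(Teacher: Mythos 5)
Your proof is correct and takes essentially the same route as the paper: both arguments split on whether $am\in\phi_{1}(rM)$ and, in the remaining case, perturb the \emph{scalar} to $a+r$, using the hypothesis $(0:_{M}r)\subseteq rM$ to dispose of products landing in $(rM:_{R}M)rM$. The only difference is organizational: the paper handles the subcase $(a+r)m\in\phi_{1}(rM)$ at the end by writing $rm=ry$ with $y\in(rM:_{R}M)M$ and concluding $m\in rM$ directly, whereas you package that same computation as a preliminary equivalence $rm\in r^{2}M\Leftrightarrow m\in rM$ (via $\phi_{1}(rM)\subseteq r^{2}M$) and use it contrapositively to show this subcase cannot occur when $m\notin rM$.
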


\begin{proof}Suppose that $rM$ is a $g-\phi_{1}-$prime $R$-submodule of $M$. Let $b\in R_{e}$ and $m\in M_{g}$ such that $bm\in rM$. If $bm\notin (rM :_{R} M)rM$, then $b\in (rM :_{R} M)$ or $m\in rM$, since $rM$ is a $g-\phi_{1}-$prime submodule. Suppose that $bm\in (rM :_{R} M)rM$. Now, $(b + r)m\in rM$. If $(b + r)m\notin (rM :_{R} M)rM$, then since $rM$ is a $g-\phi_{1}-$prime submodule, $b + r\in (rM :_{R} M)$ or $m\in rM$, which gives that $b\in (rM :_{R} M)$ or $m\in rM$. Assume that $(b + r)m\in (rM :_{R} M)rM$. Then $bm\in (rM :_{R} M)rM$ gives that $rm\in (rM :_{R} M)rM$. Hence, there exists $y\in (rM :_{R} M)M$ such that $rm = ry$ and so $m-y\in (0 :_{M} r)$. This gives that $m\in (rM :_{R} M)M + (0 :_{M} r)\subseteq rM + (0 :_{M} r)\subseteq rM$. The converse is clear.
\end{proof}

\begin{thm}\label{Theorem 2.11}Let $M$ be a $G$-graded $R$-module, $g\in G$ and $K$ be a graded $R$-submodule of $M$ such that $K_{g}\neq M_{g}$. Then the following statements are equivalent:
\begin{enumerate}
\item $K$ is a $g-\phi-$prime $R$-submodule of $M$.

\item For $m\in M_{g}-K$, $(K :_{R_{e}} m) = (K :_{R_{e}} M)\bigcup(\phi(K) :_{R_{e}} m)$.

\item For $m\in M_{g}-K$, $(K :_{R_{e}} m) = (K :_{R_{e}} M)$ or $(K :_{R_{e}} m) = (\phi(K) :_{R_{e}} m)$.

\item For any ideal $I$ of $R_{e}$ and any graded $R$-submodule $N$ of $M$, if $IN_{g}\subseteq K$ and $IN_{g}\nsubseteq \phi(K)$, then $I\subseteq(K :_{R} M)$ or $N_{g}\subseteq K$.
\end{enumerate}
\end{thm}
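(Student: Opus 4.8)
The plan is to establish the cycle $(1)\Rightarrow(2)\Rightarrow(3)\Rightarrow(4)\Rightarrow(1)$. Two standing observations will be used repeatedly: the sets $(K:_{R_{e}}m)$, $(K:_{R_{e}}M)$ and $(\phi(K):_{R_{e}}m)$ are all ideals of the commutative ring $R_{e}$, and since $\phi(K)\subseteq K$ one always has $(K:_{R_{e}}M)\subseteq(K:_{R_{e}}m)$ and $(\phi(K):_{R_{e}}m)\subseteq(K:_{R_{e}}m)$ for every $m\in M_{g}$. For $(1)\Rightarrow(2)$, fix $m\in M_{g}-K$; the inclusion $\supseteq$ is just the two containments above and needs no hypothesis, while for $\subseteq$ I take $r\in(K:_{R_{e}}m)$, so $rm\in K$ with $r\in R_{e}$ and $m\in M_{g}$. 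If $rm\notin\phi(K)$ then $rm\in K-\phi(K)$, and the $g-\phi-$prime property together with $m\notin K$ forces $r\in(K:_{R}M)\cap R_{e}=(K:_{R_{e}}M)$; if $rm\in\phi(K)$ then $r\in(\phi(K):_{R_{e}}m)$. Thus $r$ lies in the asserted union.

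For $(2)\Rightarrow(3)$ I invoke the elementary fact that an abelian group which is the union of two of its subgroups must coincide with one of them. Applying this to the ideal $(K:_{R_{e}}m)$, written as the union of its two subideals $(K:_{R_{e}}M)$ and $(\phi(K):_{R_{e}}m)$, yields (3) at once. For $(4)\Rightarrow(1)$, given $r\in R_{e}$ and $m\in M_{g}$ with $rm\in K-\phi(K)$, I apply (4) to the principal ideal $I=R_{e}r$ and the graded submodule $N=Rm$ (graded by Lemma \ref{1}(3) since $m\in h(M)$). A short computation shows $(Rm)_{g}=R_{e}m$, so $IN_{g}=R_{e}(rm)\subseteq K$ because $K$ is an $R_{e}$-submodule, while $rm\in IN_{g}$ and $rm\notin\phi(K)$ give $IN_{g}\nsubseteq\phi(K)$. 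Then (4) delivers $R_{e}r\subseteq(K:_{R}M)$, hence $r\in(K:_{R}M)$, or $R_{e}m\subseteq K$, hence $m\in K$.

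The crux is $(3)\Rightarrow(4)$. We may assume $N_{g}\nsubseteq K$ and must prove $I\subseteq(K:_{R}M)$. Since $IN_{g}\subseteq K$, we have $I\subseteq(K:_{R_{e}}n)$ for every $n\in N_{g}$, so it suffices to exhibit a single $n_{1}\in N_{g}-K$ for which $(K:_{R_{e}}n_{1})=(K:_{R_{e}}M)$, as then $I\subseteq(K:_{R_{e}}M)\subseteq(K:_{R}M)$. Suppose toward a contradiction that every $n\in N_{g}-K$ instead satisfies $(K:_{R_{e}}n)=(\phi(K):_{R_{e}}n)$. Using $IN_{g}\nsubseteq\phi(K)$, pick $a_{0}\in I$ and $n_{0}\in N_{g}$ with $a_{0}n_{0}\notin\phi(K)$. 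If $n_{0}\notin K$, then $a_{0}\in I\subseteq(K:_{R_{e}}n_{0})=(\phi(K):_{R_{e}}n_{0})$ gives $a_{0}n_{0}\in\phi(K)$, a contradiction. If $n_{0}\in K$, I use the additive perturbation already exploited in the proof of Theorem \ref{Theorem 2.1}: choosing any $n_{1}\in N_{g}-K$, both $n_{1}$ and $n_{0}+n_{1}$ lie in $N_{g}-K$, so each satisfies the type-$\phi$ equality; since $a_{0}\in I$ annihilates both into $\phi(K)$, we get $a_{0}n_{1}\in\phi(K)$ and $a_{0}(n_{0}+n_{1})\in\phi(K)$, whence $a_{0}n_{0}\in\phi(K)$, again a contradiction.

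I expect the main obstacle to be exactly this $(3)\Rightarrow(4)$ step, and within it the case $n_{0}\in K$, which cannot be handled by a direct application of the primeness condition and instead requires the perturbation argument mirroring Theorem \ref{Theorem 2.1}; the remaining implications are essentially bookkeeping around the two standing containments and the group-union lemma.
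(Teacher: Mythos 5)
Your proof is correct, and three of its four implications coincide with the paper's: $(1)\Rightarrow(2)$ is the same computation, $(2)\Rightarrow(3)$ invokes the same two-subgroup union fact, and $(4)\Rightarrow(1)$ uses the same choice $I=R_{e}r$, $N=Rm$ with $(Rm)_{g}=R_{e}m$. The crux $(3)\Rightarrow(4)$ is where you genuinely diverge. The paper argues by contraposition: assuming $I\nsubseteq(K:_{R}M)$ and $N_{g}\nsubseteq K$, it shows $rm\in\phi(K)$ for every $r\in I$ and $m\in N_{g}$, perturbing on the \emph{ring} side (replacing $r$ by $r+u$ with $u\in I-(K:_{R_{e}}M)$ when $r\in(K:_{R_{e}}M)$). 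You argue directly: since $IN_{g}\subseteq K$ forces $I\subseteq(K:_{R_{e}}n)$ for every $n\in N_{g}$, a single witness $n_{1}\in N_{g}-K$ with $(K:_{R_{e}}n_{1})=(K:_{R_{e}}M)$ already gives $I\subseteq(K:_{R}M)$, and if no witness exists you contradict $IN_{g}\nsubseteq\phi(K)$, perturbing on the \emph{module} side ($n_{0}\mapsto n_{0}+n_{1}$) when $n_{0}\in K$. Two observations on what this buys. First, your route dispenses with the ring-side perturbation altogether: the containment $I\subseteq(K:_{R_{e}}n_{1})$ does the work that the paper's case analysis on $r$ does. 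Second, and more importantly, your sub-case $n_{0}\in K$ supplies an argument the paper's own proof glosses over: the paper applies hypothesis (3) to arbitrary $m\in N_{g}$, including possibly $m\in N_{g}\bigcap K$, where (3) asserts nothing since it is stated only for $m\in M_{g}-K$; the module-side perturbation you borrow from Theorem \ref{Theorem 2.1} is precisely what is needed to close that case. So your proof is not only valid but, at the crux, more complete than the paper's; the only point worth making explicit is that extracting a single product $a_{0}n_{0}\notin\phi(K)$ from $IN_{g}\nsubseteq\phi(K)$ uses that $\phi(K)$ is closed under sums (or is empty), a step both you and the paper leave implicit.
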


\begin{proof}
\begin{enumerate}
$(1)\Rightarrow(2):$ Let $m\in M_{g}-K$ and $r\in (K :_{R_{e}} m)-(\phi(K) :_{R_{e}} m)$. Then $rm\in K-\phi(K)$. Since $K$ is a $g-\phi-$prime $R$-submodule of $M$, $r\in(K :_{R_{e}} M)$. As we may assume that $\phi(K)\subseteq K$, the other inclusion always holds.

$(2)\Rightarrow(3):$ It is known that if a subgroup is the union of two subgroups, then it is equal to one of them.

$(3)\Rightarrow(4):$ Let $I$ be an ideal of $R_{e}$ and $N$ be a graded $R$-submodule of $M$ such that $IN_{g}\subseteq K$. Suppose that $I\nsubseteq(K :_{R} M)$ and $N_{g}\nsubseteq K$. We show that $IN_{g}\subseteq \phi(K)$. Let $r\in I$ and $m\in N_{g}$. If $r\notin(K :_{R_{e}} M)$, then since $rm\in K$, we have $(K :_{R_{e}} m)\neq(K :_{R_{e}} M)$. Hence, by our assumption, $(K :_{R_{e}} m) = (\phi(K) :_{R_{e}} m)$. So, $rm\in \phi(K)$. Now, assume that $r\in I\bigcap(K :_{R_{e}} M)$. Let $u\in I-(K :_{R_{e}} M)$. Then $r + u\in I-(K :_{R_{e}} M)$. So, by the first case, for each $m\in N_{g}$ we have $um\in \phi(K)$ and $(r + u)m\in \phi(K)$. This gives that $rm\in \phi(K)$. Thus in any case $rm\in \phi(K)$. Therefore, $IN_{g}\subseteq \phi(K)$.

$(4)\Rightarrow(1):$ Let $r\in R_{e}$ and $m\in M_{g}$ such that $rm\in K-\phi(K)$. Suppose that $I=R_{e}r$ and $N=Rm$. Then $I$ is an ideal of $R_{e}$ and $N$ is a graded $R$-submodule of $M$ such that $IN_{g}=R_{e}r(Rm)_{g}=R_{e}rR_{e}m_{g}=R_{e}rR_{e}m=R_{e}R_{e}rm\subseteq R_{e}K\subseteq RK\subseteq K$ and $IN_{g}\nsubseteq \phi(K)$. By our assumption, $I\subseteq(K :_{R} M)$ or $N_{g}\subseteq K$, and then $r\in(K :_{R} M)$ or $m\in N\bigcap M_{g}=N_{g}\subseteq K$.
\end{enumerate}
\end{proof}

Let $M$ be a $G$-graded $R$-module and $K$ be a graded $R$-submodule of $M$. Then $M/K$ is a $G$-graded $R$-module by $(M/K)_{g}=(M_{g}+K)/K$ for all $g\in G$. Let $\phi:GS(M)\rightarrow GS(M)\bigcup\{\emptyset\}$ be a function. Define $\phi_{K}:GS(M/K)\rightarrow GS(M/K)\bigcup\{\emptyset\}$ by $\phi_{K}(N/K)=(\phi(N)+K)/K$ for $N\supseteq K$ and $\phi_{K}(N/K)=\emptyset$ for $\phi(N)=\emptyset$.

\begin{thm}\label{Theorem 2.12}Let $M$ be a graded $R$-module and $K$ be a graded $R$-submodule of $M$. If $N$ is a graded $\phi-$prime $R$-submodule of $M$ and $K\subseteq N$, then $N/K$ is a graded $\phi_{K}-$prime $R$-submodule of $M/K$.
\end{thm}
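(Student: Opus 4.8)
The plan is to verify the graded $\phi_K$-prime property of $N/K$ directly from the definition, transferring each condition back to the module $M$ where we can use the hypothesis that $N$ is graded $\phi$-prime. First I would take homogeneous elements $r \in h(R)$ and $\overline{x} \in h(M/K)$ with $r\overline{x} \in (N/K) - \phi_K(N/K)$, and fix a homogeneous lift; since $(M/K)_g = (M_g + K)/K$, the homogeneous element $\overline{x}$ of degree $g$ can be written as $\overline{x} = m + K$ for some $m \in M_g$. The goal is to conclude that $\overline{x} \in N/K$ or $r \in (N/K :_R M/K)$.

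The key step is to translate the membership $r\overline{x} \in (N/K) - \phi_K(N/K)$ into a statement in $M$. The condition $rm + K = r\overline{x} \in N/K$ means $rm \in N$ (using $K \subseteq N$). For the excluded part, recall $\phi_K(N/K) = (\phi(N) + K)/K$, so $r\overline{x} \notin \phi_K(N/K)$ says $rm \notin \phi(N) + K$, which in particular forces $rm \notin \phi(N)$. Combining these, we obtain $rm \in N - \phi(N)$ with $r \in h(R)$ and $m \in h(M)$. Applying the hypothesis that $N$ is graded $\phi$-prime then yields either $m \in N$ or $r \in (N :_R M)$.

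Finally I would push each of these two conclusions down to $M/K$. If $m \in N$, then $\overline{x} = m + K \in N/K$. If $r \in (N :_R M)$, so $rM \subseteq N$, then for any $\overline{y} = y + K \in M/K$ we have $r\overline{y} = ry + K \in N/K$, giving $r \in (N/K :_R M/K)$. I also need to check that $N/K$ is a proper graded submodule of $M/K$: since $N$ is proper in $M$ and $K \subseteq N$, the quotient $N/K$ is proper in $M/K$, and it is graded by the standard correspondence for quotients (Lemma \ref{1}). The main obstacle, and the only genuinely delicate point, is handling the $\phi_K$ part cleanly: one must be careful that $rm \notin \phi(N) + K$ really does imply the usable condition $rm \notin \phi(N)$, and that the case $\phi(N) = \emptyset$ (where $\phi_K(N/K) = \emptyset$) is covered, but in that case the argument reduces to the graded prime situation and goes through identically.
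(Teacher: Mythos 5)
Your proof is correct and follows essentially the same route as the paper's: lift the homogeneous element of $M/K$ to a homogeneous element of $M$, translate $r\overline{x}\in (N/K)-\phi_{K}(N/K)$ into $rm\in N-\phi(N)$ (using $K\subseteq N$ and $\phi(N)\subseteq\phi(N)+K$), apply the graded $\phi$-prime hypothesis on $N$, and push the two conclusions back down to $M/K$. If anything, your handling of the homogeneous lift via $(M/K)_{g}=(M_{g}+K)/K$ and of the degenerate case $\phi(N)=\emptyset$ is more explicit than the paper's own argument.
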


\begin{proof} By (\cite{Saber}, Lemma 3.2), $N/K$ is a graded $R$-submodule of $M/K$. Let $r\in h(R)$ and $m+K\in h(M/K)$ such that $r(m+K)=rm+K\in N/K-\phi_{K}(N/K)$. Then $m\in h(M)$ such that $rm\in N-(\phi(N)+K)$, and then $rm\in N-\phi(N)$. Since $N$ is graded $\phi-$prime, $r\in (N:_{R}M)$ or $m\in N$, and then $r\in (N/K:_{R}M/K)$ or $m\in N$. Therefore, $N/K$ is a graded $\phi_{K}-$prime $R$-submodule of $M/K$.
\end{proof}

Let $M$ be a $G$-graded $R$-module and $S\subseteq h(R)$ be a multiplicative set. Then $S^{-1}M$ is a $G$-graded $S^{-1}R$-module with $(S^{-1}M)_{g}=\left\{\frac{m}{s},m\in M_{h}, s\in S\cap R_{hg^{-1}}\right\}$ for all $g\in G$, and $(S^{-1}R)_{g}=\left\{\frac{a}{s},a\in R_{h}, s\in S\cap R_{hg^{-1}}\right\}$ for all $g\in G$. If $K$ is a graded $R$-submodule of $M$, then $S^{-1}K$ is a graded $S^{-1}R$-submodule of $S^{-1}M$. It is well known that there is a one-to-one correspondence between the set of all graded prime $R$-submodules $K$ of $M$ with $(K :_{R} M)\bigcap S =\emptyset$ and the set of all graded prime $S^{-1}R$-submodules $S^{-1}K$ of $S^{-1}M$. For a graded $R$-submodule $K$ of $M$, $K(S)=\left\{m\in M:\mbox{ there exists }s\in S\mbox{ such that }sm\in K\right\}$ is a graded $R$-submodule of $M$ containing $K$ and $S^{-1}(K(S))=S^{-1}K$. Let $\phi:GS(M)\rightarrow GS(M)\bigcup\{\emptyset\}$ be a function. Define $(S^{-1}\phi):GS(S^{-1}M)\rightarrow GS(S^{-1}M)\bigcup\{\emptyset\}$ by $(S^{-1}\phi)(S^{-1}K)=S^{-1}(\phi(K(S))$ if $\phi(K(S))\neq\emptyset$ and $(S^{-1}\phi)(S^{-1}K)=\emptyset$ if $\phi(K(S))=\emptyset$. Note that, $(S^{-1}\phi_{\emptyset})=\phi_{\emptyset}$ and $(S^{-1}\phi_{0})=\phi_{0}$.

\begin{thm}\label{Theorem 2.12 (2)}Let $M$ be a $G$-graded $R$-module and $K$ be a graded $\phi-$prime $R$-submodule of $M$. Suppose that $S$ is a multiplicative subset of $h(R)$. If $S^{-1}K\neq S^{-1}M$ and $S^{-1}(\phi(K))\subseteq(S^{-1}\phi)(S^{-1}K)$, then $S^{-1}K$ is a graded $(S^{-1}\phi)-$prime $S^{-1}R$-submodule of $S^{-1}M$. Moreover, if $g\in G$ such that $(S^{-1}K)\bigcap M_{g}\neq S^{-1}((\phi(K))$, then $K(S)\bigcap M_{g} = K$.
\end{thm}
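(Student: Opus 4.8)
The plan is to verify the defining property of $(S^{-1}\phi)$-primeness directly, translating each membership in $S^{-1}M$ back to a homogeneous membership in $M$ and then invoking that $K$ is graded $\phi-$prime. First I would take homogeneous elements $\frac{a}{s}\in h(S^{-1}R)$ and $\frac{m}{t}\in h(S^{-1}M)$ with $\frac{a}{s}\cdot\frac{m}{t}=\frac{am}{st}\in S^{-1}K-(S^{-1}\phi)(S^{-1}K)$; by the description of the homogeneous components of $S^{-1}R$ and $S^{-1}M$, the representatives $a\in h(R)$ and $m\in h(M)$ may be chosen homogeneous, and $s,t\in S\subseteq h(R)$. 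From $\frac{am}{st}\in S^{-1}K$ the usual localization argument produces $s'\in S$ with $s'am\in K$, and since $s'\in S\subseteq h(R)$ the element $s'a$ lies in $h(R)$ while $(s'a)m\in K$.

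The crucial point is to show $(s'a)m\notin\phi(K)$. If instead $s'am\in\phi(K)$, then $\frac{am}{st}=\frac{s'am}{s's t}\in S^{-1}(\phi(K))$, and the hypothesis $S^{-1}(\phi(K))\subseteq(S^{-1}\phi)(S^{-1}K)$ would force $\frac{am}{st}\in(S^{-1}\phi)(S^{-1}K)$, contradicting the choice of $\frac{am}{st}$. Hence $(s'a)m\in K-\phi(K)$, and since $K$ is graded $\phi-$prime I obtain $m\in K$ or $s'a\in(K:_{R}M)$. In the first case $\frac{m}{t}\in S^{-1}K$. In the second case $s'aM\subseteq K$ gives, for every $\frac{m'}{t'}\in S^{-1}M$, $\frac{a}{s}\cdot\frac{m'}{t'}=\frac{s'am'}{s'st'}\in S^{-1}K$, so $\frac{a}{s}\in(S^{-1}K:_{S^{-1}R}S^{-1}M)$. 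This establishes that $S^{-1}K$ is graded $(S^{-1}\phi)-$prime.

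For the moreover part, since $K\subseteq K(S)$ and $K$ is graded, the inclusion $K\cap M_{g}\subseteq K(S)\cap M_{g}$ is automatic, so the content is the reverse inclusion. I would take $m\in K(S)\cap M_{g}$ and choose $s\in S$ with $sm\in K$; as $s\in h(R)$ and $m\in M_{g}\subseteq h(M)$, the product $sm$ is homogeneous and the definition of graded $\phi-$primeness applies. If $sm\notin\phi(K)$, then $m\in K$ or $s\in(K:_{R}M)$; the latter is impossible, because $s\in S$ together with $sM\subseteq K$ would yield $S^{-1}K=S^{-1}M$, against the standing hypothesis $S^{-1}K\neq S^{-1}M$. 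Thus $m\in K$, giving $K(S)\cap M_{g}=K\cap M_{g}$.

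The main obstacle is the remaining degenerate situation, in which every $s\in S$ witnessing $m\in K(S)$ satisfies $sm\in\phi(K)$; in that case $\frac{m}{1}=\frac{sm}{s}$ lies in $S^{-1}(\phi(K))$, so $\phi$-primeness cannot be invoked. This is precisely what the $g$-degree hypothesis $(S^{-1}K)\cap M_{g}\neq S^{-1}(\phi(K))$ is meant to exclude: it guarantees that in degree $g$ the localization $S^{-1}K$ is not swallowed by $S^{-1}(\phi(K))$, so a witness $s$ with $sm\notin\phi(K)$ can be extracted, after which the previous paragraph applies. I expect the careful bookkeeping of homogeneous degrees in the localization — ensuring $s'a$ and $sm$ remain homogeneous and that every witnessing denominator can be taken in $S\subseteq h(R)$ — to be where the argument demands the most care.
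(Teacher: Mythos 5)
Your proof of the first assertion is correct and follows essentially the same route as the paper: pull $\frac{am}{st}\in S^{-1}K$ back to $s'am\in K$ for some $s'\in S$, rule out $s'am\in\phi(K)$ by observing that otherwise $\frac{am}{st}\in S^{-1}(\phi(K))\subseteq(S^{-1}\phi)(S^{-1}K)$, and then apply graded $\phi$-primeness of $K$. (The paper applies primeness twice, first to $u$ and $rm$ using $(K:_{R}M)\cap S=\emptyset$ and then to $r$ and $m$, whereas you absorb $s'$ into the ring element and apply it once; both are fine.)

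The ``moreover'' part, however, has a genuine gap. The hypothesis $(S^{-1}K)\cap M_{g}\neq S^{-1}(\phi(K))$ is an inequality between two \emph{sets}; it does not allow you, for a \emph{fixed} $m\in K(S)\cap M_{g}$, to ``extract a witness $s$ with $sm\notin\phi(K)$.'' For a particular $m$ it may perfectly well happen that every $s\in S$ with $sm\in K$ satisfies $sm\in\phi(K)$ (i.e.\ $m\in\phi(K)(S)$), and the global inequality of sets says nothing about that particular $m$, so your final step does not follow. The paper closes exactly this gap by an aggregation argument rather than an element-wise one: the two cases show $K(S)\cap M_{g}\subseteq K\cup\bigl(\phi(K)(S)\bigr)$, and since $K(S)\cap M_{g}$ is an additive subgroup contained in a union of two subgroups, it must be contained in one of them --- the same union-of-subgroups fact invoked in Theorem \ref{Theorem 2.11}, $(2)\Rightarrow(3)$. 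If $K(S)\cap M_{g}\subseteq\phi(K)(S)$, then passing to the localization yields $(S^{-1}K)\cap M_{g}=S^{-1}\bigl(\phi(K)(S)\bigr)=S^{-1}(\phi(K))$, contradicting the degree-$g$ hypothesis; hence $K(S)\cap M_{g}\subseteq K$, which gives the conclusion. So your case analysis (using $(K:_{R}M)\cap S=\emptyset$, which you correctly derive from $S^{-1}K\neq S^{-1}M$) is the right first step, but the degenerate case must be eliminated globally, via the subgroup-union lemma followed by localization, not pointwise.
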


\begin{proof} Let $r/s\in h(S^{-1}R)$ and $m/t\in h(S^{-1}M)$ such that $(r/s)(m/t)\in S^{-1}K-(S^{-1}\phi)(S^{-1}K)$. Then $rm/st\in S^{-1}K-S^{-1}(\phi(K))$, and then there exists $u\in S$ such that $urm\in K-\phi(K)$ (note that, for each $v\in S$, $vrm\notin \phi(K)$). Since $K$ is graded $\phi-$prime and $(K :_{R}
M)\bigcap S=\emptyset$, it gives that $rm\in K-\phi(K)$ and so $r\in(K :_{R} M)$ or $m\in K$, and then $r/s\in S^{-1}(K:_{R}M)\subseteq(S^{-1}K:_{S^{-1}R}S^{-1}M)$ or $m/t\in S^{-1}K$. Hence, $S^{-1}K$ is a graded $(S^{-1}\phi)-$prime $S^{-1}R$-submodule of $S^{-1}M$. Moreover, suppose that $(S^{-1}K)\bigcap M_{g}\neq S^{-1}((\phi(K))$ for some $g\in G$ and $m\in K(S)\bigcap M_{g}$. Then there exists $s\in S$ such that $sm\in K$. If $sm\notin \phi(K)$, then $m\in K$. If $sm\in \phi(K)$, then $m\in \phi(K)(S)$. So, $K(S)\bigcap M_{g} = K\bigcup(\phi(K)(S))$. Hence, either $K(S)\bigcap M_{g} = K$ or $K(S)\bigcap M_{g} = (\phi(K)(S))$. If $K(S)\bigcap M_{g} = (\phi(K)(S))$, then we have $(S^{-1}K)\bigcap M_{g} = (S^{-1}K(S))\bigcap M_{g} = S^{-1}(\phi(K)(S)) = S^{-1}(\phi(K))$, which is not the case. So, $K(S)\bigcap M_{g} = K$.
\end{proof}

\begin{question}\label{2}Let $M$ be a graded $R$-module, $S$ be a multiplicative subset of $h(R)$ and $K$ be a graded $R$-submodule of $M$. If $S^{-1}K$ is a graded $S^{-1}\phi-$prime $S^{-1}R$-submodule of $S^{-1}M$, then clearly, $(K :_{R} M)\bigcap S =\emptyset$. In general, under what conditions $K$ will be a graded $\phi-$prime $R$-submodule of $M$?. Even in the cases $\phi= \phi_{0}$, $\phi= \phi_{1}$ and $\phi= \phi_{2}$, we could not answer this question.
\end{question}

\section*{\textbf{Acknowledgement}}

This research was funded by the Deanship of Scientific Research at Princess Nourah bint Abdulrahman University through the Fast-track Research Funding Program.


\begin{thebibliography}{9}

\bibitem{Dawwas Zoubi Bataineh} R. Abu-Dawwas, K. Al-Zoubi and M. Bataineh, Prime submodules of graded modules, Proyecciones Journal of Mathematics, 31 (4) (2012), 355-361.

\bibitem{Dawwas Yildiz} R. Abu-Dawwas, E. Y{\i}ld{\i}z, \"{U}. Tekir and S. Ko\c{c}, On graded $1$-absorbing prime ideals, accepted in the Sao Paulo Journal of Mathematical Sciences, to appear.

\bibitem{Atani} S. E. Atani, On graded prime submodules, Chiang Mai Journal of Science, 33 (1) (2006), 3-7.

\bibitem{Farzalipour} F. Farzalipour, P. Ghiasvand, On the union of graded prime submodules, Thai Journal of Mathematics, 9 (1) (2011), 49-55.

\bibitem{Hazart} R. Hazrat, Graded rings and graded Grothendieck groups, Cambridge University press, 2016.

\bibitem{Nastasescue}  C. Nastasescu and F. Oystaeyen, Methods of graded rings, Lecture Notes in Mathematics, 1836, Springer-Verlag, Berlin, 2004.

\bibitem{Oral} K. H. Oral, \"{U}. Tekir and A. G. Agarg\"{u}n, On graded prime and primary submodules, Turkish Journal of Mathematics, 35 (2) (2011), 159-167.

\bibitem{Refai Hailat} M. Refai, M. Hailat and S. Obiedat, Graded radicals and graded prime spectra, Far East Journal of Mathematical Sciences, (2000), 59-73.

\bibitem{Saber} H. Saber, T. Alraqad and R. Abu-Dawwas, On graded $s$-prime submodules, AIMS Mathematics, 6 (3) (2020), 2510-2524.

\bibitem{Zamani} N. Zamani, $\phi-$Prime submodules, Glasgow Mathematical Journal, 52 (2010), 253-259.

\end{thebibliography}
\end{document}